\newcommand{\ra}{\rightarrow}
\newcommand{\rra}{\rightrightarrows}
\newcommand{\ie}{\unskip, i.\,e.,\xspace}
\newcommand{\eg}{\unskip, e.\,g.,\xspace}
\newcommand{\sut}{\text{s.\,t.\,}}
\newcommand{\nrm}[1]{\left\lVert#1\right\rVert}
\newcommand{\abs}[1]{\left\lvert#1\right\rvert}
\newcommand{\N}{\ensuremath{\mathbb{N}}}
\newcommand{\Q}{\ensuremath{\mathbb{Q}}}
\newcommand{\R}{\ensuremath{\mathbb{R}}}
\newcommand{\X}{\ensuremath{\mathbb{X}}}
\newcommand{\sm}{\ensuremath{\setminus}}
\newcommand*\diff{\mathop{}\!\mathrm{d}}
\newcommand{\eps}{\ensuremath{\varepsilon}}
\newcommand{\spc}{\ensuremath{\,\,}}
\newcommand{\dom}{\ensuremath{\text{dom}}}
\newcommand{\ball}{\ensuremath{\mathcal B}}
\definecolor{dgreen}{rgb}{0.0, 0.5, 0.0}
\newtheorem{dfn}{Definition}
\newtheorem{thm}{Theorem}
\newtheorem{crl}{Corollary}
\newtheorem{rem}{Remark}
\newcommand{\D}{\ensuremath{\mathcal{D}}}
\newcommand{\B}{\ensuremath{\mathbb{B}}}
\newcommand{\J}{\ensuremath{\mathbb{J}}}
\newcommand{\cube}{\ensuremath{\mathcal{H}}}
\newcommand{\wrt}{w.~r.~t.\xspace}
\newcommand{\wlg}{w.~l.~g.\xspace}
\newcommand{\sea}{\searrow}
\title{Towards a constructive framework for control theory}
\author{Pavel Osinenko
\thanks{
Email: \texttt{p.osinenko@yandex.ru}
}}
\begin{document}

\IEEEaftertitletext{\vspace{-30pt}}

\maketitle

\thispagestyle{empty}
\pagestyle{empty}

\begin{abstract}
This work presents a framework for control theory based on constructive analysis to account for discrepancy between mathematical results and their implementation in a computer, also referred to as computational uncertainty.
In control engineering, the latter is usually either neglected or considered submerged into some other type of uncertainty, such as system noise, and addressed within robust control.
However, even robust control methods may be compromised when the mathematical objects involved in the respective algorithms fail to exist in exact form and subsequently fail to satisfy the required properties.
For instance, in general stabilization using a control Lyapunov function, computational uncertainty may distort stability certificates or even destabilize the system despite robustness of the stabilization routine with regards to system, actuator and measurement noise.
In fact, battling numerical problems in practical implementation of controllers is common among control engineers. 
Such observations indicate that computational uncertainty should indeed be addressed explicitly in controller synthesis and system analysis.
The major contribution here is a fairly general framework for proof techniques in analysis and synthesis of control systems based on constructive analysis which explicitly states that every computation be doable only up to a finite precision thus accounting for computational uncertainty.
A series of previous works is overviewed, including constructive system stability and stabilization, approximate optimal controls, eigenvalue problems, Caratheodory trajectories, measurable selectors.
Additionally, a new constructive version of the Danskin's theorem, which is crucial in adversarial defense, is presented.
\end{abstract}

\section{Introduction}\label{sec:intro}



As stated above, computational uncertainty in control oftentimes poses serious issues and should in general be differentiated from other types of uncertainty \cite{vasile2017optimising}.
It may occur when certain idealized mathematical objects fail to exist in practice, such as exact optimizers.
For instance, Sutherland et al. \cite{sutherland2019closed} recently showed loss of Lyapunov stability under non-uniqueness of optimal controls due to computational uncertainty in model-predictive control. 
A number of approaches in tackling computational uncertainty used computable analysis of Weihrauch \cite{Weihrauch2012-computable-analys}, where each computation is required to terminate.
For instance, Collins \cite{collins2009computable} suggested it as a general foundation of control theory.
A similar proposal was made in \cite{buescu2011computability} studying links between dynamical systems and computability.
In the context of planar dynamical systems, computability of basins of attraction was considered in \cite{gracca2011computability}.
Formal methods, such SMT (Satisfiability Modulo Theory) found applications to tackle the issue of computational uncertainty.
Shoukry et al. \cite{shoukry2015sound} used SMT-solvers for state estimation of linear dynamic systems.
Bessa et al. \cite{bessa2016formal} used them for stability verification of uncertain linear systems.
Another noticeable tool is the Coq proof assistant.
Cohen and Rouhling \cite{cohen2017formal} used it, particularly the Coqelicot library, for formalization of the LaSalle's principle.
The axiomatization of reals was classical though, but they believe the results to be close to being constructive. 
The same tool was used in \cite{rouhling2018formal} for formalization of control of inverted pendulum, and in \cite{gallois2018coq} -- for formalization of digital filters.
Jasim and Veres \cite{jasim2017towards} stressed the help of formal methods to assist system analysis, commonly done manually by an engineer. 
Various formal logical systems found wide applications, perhaps, most notably temporal and differential dynamic logic \cite{tan2020deductive}.
The latter is realized in the software called KeYmaera X.
Gao et al. \cite{gao2019numerically} developed a framework to argument about stability in terms of $\eps$-stability and $\eps$-Lyapunov functions to address for computational uncertainty.
Tsiotras and Mesbahi \cite{Tsiotras2017-alg-ctrl} stressed the issues of computational uncertainty in what they called ``algorithmic control theory''.

\textbf{Summary and contribution:} it is clear that computational uncertainty is being attacked from various directions in the control community with different approaches having their pros and cons.
For instance, despite attractiveness of computable analysis, its ambient logic is classical and although the computations are required to terminate, there is no way to say exactly after how many iterations.
Formal verification software is gaining attention, but it is still computationally heavy and requires special training.
In this work, we suggest another framework, based on constructive analysis, which has the advantage that its style is quite close to the usual business of a control engineer, just done with special care.
A brief description is given in Section \ref{sec:framework} followed by an overview of the results achieved so far, including in the field of optimal control, stabilization, system analysis.
Whereas the detailed proofs can be found in the referenced works, outlines and key steps are provided.
As a new result, an approximate and constructive version of the Danskin's theorem, which is used in adversarial defense and reinforcement learning methods, is presented in Section \ref{sec:Danskin}.

\textbf{Notation and abbreviations.} Convergence of $t$ to $a$ from the right: $t \sea a$.
A closed ball with a radius $r$ centered at $x$: $\ball_r(x)$, or just $\ball_r$ if $x=0$. 
A closed hypercube with a side length $r$ centered at $x$: $\cube_r(x)$, or just $\cube_r$ if $x=0$.
Euclidean distance, or $\sup$-norm of a function: $\nrm{\bullet}$.
Domain of a function: $\dom$.
``Such that'': \sut.
``With respect to'': \wrt.
``Without loss of generality'': \wlg.

\section{The framework}\label{sec:framework}


The foundation of the framework presented here is the constructive analysis of Bishop \cite{Bishop1985-constr-analysis}.
In this work, quite a bit of foundations was already tackled, but the field is actively developed -- recent works offer extensive coverage of such subjects as stochastic processes \cite{chan2019foundations} and abstract algebra \cite{Lombardi2014-constr-commut-alg}.
A fresh presentation, close to program code, can be found in \cite{Schwichtenberg2015-constr-analys}.
An interested reader may also take a look at this nice and easy-to-read recent explanation: \cite{havea2020being}.
The essence of constructive analysis is that everything must have a sound and finite computational content.
In this regard, constructive analysis does not ``suppress'' computational uncertainty, but rather takes an explicit account thereof.
For instance, a real-valued vector $x=(x^1,\dots,x^n)^\top$ is treated as an algorithm that computes rational approximations $\{x_i\}_i$ with a convergence certificate like $\forall i, j \spc \max_{k = 1, \dots, n} |x^k_i-x^k_j| \le \nicefrac 1 i + \nicefrac 1 j$.
This is in contrast to the classical definition where no convergence information is required -- a vector is simply a tuple of equivalence classes of Cauchy sequences, not necessarily computable.
In practice, we are always dealing with some $x_i$ depending on the computational device's precision.
Sets are also treated with care in constructive analysis as there are plenty of examples which are computationally problematic \cite{Bridges1996-sets}.
For instance, bounded sets are in general not necessarily \textit{totally bounded} -- to mean enclosing an algorithm that computes finite meshes approximating the said set.
This goes as follows: a set $X \subset \R^n$ is called totally bounded if there is an algorithm that, for any $\eps$, constructs a finite set $\{x_i\}_{i=1}^N$ of distinct points in $X$ such that any $x \in X$ lies within an $\eps$-ball centered at some $x_i$.
If a totally bounded set is complete, then it is called compact (notice, the related finite-mesh approximation algorithm is still encoded in the definition of the compact set!)
The distance-to-set function $\nrm{x - A} \triangleq \inf_{y \in A} \nrm{x - y}$ is also not always finitely computable -- those sets, whose function is, are called \textit{located}.

Another example of encoding computational content is within the definition of a continuous function.
A function $f: \R^n \ra \R^m$ is a pair of algorithms: one computes rational approximations to $f(x)$ from rational approximations to $x \in \R^n$, and the second one, denoted $\omega_f$ and called \textit{modulus of continuity}, satisfies the formula: $\forall \eps>0, c \in \R^n, r>0 , \forall x, y \in \ball_r(c) \spc \nrm{x-y} \le \omega(\eps, c, r) \implies \nrm{f(x) - f(y)} \le \eps$.
Such a modulus of continuity is called to have the $\omega$-format.
We will also use the $\mu$-format to mean $\forall x,y \spc \nrm{f(x) - f(y)} \le \mu_f(\nrm{x-y})$ with $\mu_f: \R \ra \R$ positive-definite.
Constructively, these two formats are not equivalent, unlike classically.
What makes a difference in working constructively is that attention should be paid to the objects or claims without a finite computational content.
So is \eg convergent subsequence extraction (also called sequential compactness argument) which caries no information whatsoever about how to actually do this extraction.
Such arguments are commonly used in \eg optimal control which is discussed in more detail in Section \ref{sec:optimal}.
Consequently, it is not constructively valid to claim existence of exact optimizers in general.
Still, the most evident difference to the classical reasoning is undecidability of $a=b$ vs. $a \ne b$ for arbitrary real numbers $a ,b$.
Despite the said limitations, constructive analysis does offer a powerful apparatus for control engineer as long as a clear correspondence of pure mathematical objects and their computational realizations is concerned. 
This is supported, in particular, by the famous realizability interpretation \cite{troelstra2014constructivism} that states that every constructive existence claim is isomorphic to a finite algorithm.


\section{Optimal control and stabilization} \label{sec:optimal}

We start with optimal control which is undoubtedly the central branch of control theory -- it is worth noting that reinforcement learning, one of the most vanguard methods of control for the time being, is based upon optimal control theory, dynamic programming in particular.
In turn, central to optimal control is the variety of extremum value theorems for function spaces, which are used to show existence of optimal controls altogether, either as functions of time or state.

It is precisely the extremum value theorems (EVTs) that, in practice, suffer from computational uncertainty.
The problem is that the most of the related proofs use a sequential compactness argument of the following kind: one constructs a bounded sequence of controls and then extracts a convergent subsequence from it.
In practice, a naive iterative computation of optimizing controls often fails to converge, particularly due to possible non-uniqueness of optimal controls.
Consequently, this non-uniqueness may pose formidable difficulties -- recall \eg \cite{sutherland2019closed}.
Therefore, constructively, we can only rely on approximately optimal controls in general as per:

\begin{thm}[Constructive functional EVT \cite{Osinenko2018-constr-aEVTfnc-Euclid}]
	\label{thm:aEVT-fnc}
	Consider $\mathcal U$, the space of all equi-Lipschitz and equi-bounded functions from a compact set $\X \subset \R^n$ to $\R^m$, and $J$, a uniformly continuous (cost) functional on $\mathcal U$ (``equi'' here to mean having a common Lipschitz constant and a common bound, respectively).
	For any $\eps > 0$, there exists a $\kappa^\eps \in \mathcal U$ such that $J[\kappa^\eps] - \eps \le \inf_{\mathcal U} J$.
\end{thm}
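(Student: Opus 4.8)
The plan is to reduce this infinite-dimensional extremum problem to a finite one by exhibiting an explicit finite $\delta$-net of $\mathcal U$ whose members lie inside $\mathcal U$ itself, and then performing an approximate minimization over that net. Write $L$ for the common Lipschitz constant and $M$ for the common bound of the functions in $\mathcal U$, so that every $u \in \mathcal U$ maps into $\ball_M \subset \R^m$.

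First I would establish a constructive Arzelà--Ascoli statement: \emph{$\mathcal U$ equipped with the sup-norm is totally bounded}. Since $\X$ is compact it is totally bounded, so for any $\rho>0$ I can construct a finite $\rho$-net $\{x_1,\dots,x_K\}\subset\X$; likewise $\ball_M$ is totally bounded, giving a finite $\eta$-net $\{y_1,\dots,y_P\}$. The equi-Lipschitz property is what lets the finite sample $(u(x_1),\dots,u(x_K))$ control $u$ everywhere: for $u,v\in\mathcal U$ agreeing to within $\eta$ at every $x_j$ one has $\nrm{u-v}\le \eta+2L\rho$, because any $x\in\X$ lies within $\rho$ of some net point. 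To obtain representatives that actually live in $\mathcal U$ (as the theorem demands of $\kappa^\eps$), I would, for each admissible assignment $\phi$ of range-net values to the domain-net points, build the McShane--Whitney extension $g_\phi^l(x)=\min_j\bigl[\phi^l(x_j)+L\nrm{x-x_j}\bigr]$ componentwise and then project it onto $\ball_M$; projection is nonexpansive, so $g_\phi$ keeps Lipschitz constant $L$ and bound $M$, hence $g_\phi\in\mathcal U$, while reproducing $\phi$ at the net points. The finitely many $g_\phi$ then form a $\delta$-net of $\mathcal U$ once $\rho,\eta$ are chosen with $\eta+2L\rho\le\delta$.

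Second, I would use the uniform continuity of $J$ to fix the fineness. From a modulus of continuity for $J$, choose $\delta>0$ so that $\nrm{u-v}\le\delta$ forces $\abs{J[u]-J[v]}\le\eps/3$, and build the net $\{u_1,\dots,u_N\}\subset\mathcal U$ above at scale $\delta$. For arbitrary $u\in\mathcal U$ pick $u_i$ within $\delta$; then $\min_k J[u_k]\le J[u_i]\le J[u]+\eps/3$, and taking the infimum over $u$ gives $\min_k J[u_k]\le\inf_{\mathcal U}J+\eps/3$ (this also certifies that $\inf_{\mathcal U}J$ exists as a constructive real, being the infimum of a uniformly continuous functional on a totally bounded set). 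Finally, compute each $J[u_k]$ to precision $\eps/3$ and select an index $i^\star$ with $J[u_{i^\star}]\le\min_k J[u_k]+\eps/3$ --- a purely finite approximate minimization, which is constructively legitimate precisely because we allow the slack and do not insist on an exact minimizer. Setting $\kappa^\eps\triangleq u_{i^\star}\in\mathcal U$ yields $J[\kappa^\eps]\le\inf_{\mathcal U}J+2\eps/3\le\inf_{\mathcal U}J+\eps$, i.e. $J[\kappa^\eps]-\eps\le\inf_{\mathcal U}J$.

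The main obstacle is the constructive Arzelà--Ascoli step, and within it two constructive pitfalls must be navigated. One is that a subset of a totally bounded set need not be totally bounded constructively, so the net cannot be harvested passively from the image of the sampling map; it must be built actively, which is exactly what the McShane-extension-plus-projection construction accomplishes. The other is that selecting the ``nearest'' range-net value and deciding whether an assignment $\phi$ is Lipschitz-admissible both involve comparisons of reals, which are undecidable at ties; I would circumvent this in the usual constructive manner by working with rational slack --- declaring $\phi$ admissible whenever $\nrm{\phi(x_i)-\phi(x_j)}\le L\nrm{x_i-x_j}+3\eta$, a test one can pass decidably on rational approximations --- and verifying that every genuine $u\in\mathcal U$ still has an admissible nearest assignment within the required tolerance. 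Tracking these margins only inflates the constants in $\eta+2L\rho\le\delta$ and is routine.
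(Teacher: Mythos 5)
Your overall architecture matches the paper's: show $\mathcal U$ is totally bounded by actively building a finite net of functions lying inside $\mathcal U$, then run a finite approximate minimization using the modulus of $J$, with rational slack to sidestep undecidable real comparisons. The second half of your argument (the $\nicefrac \eps 3$ bookkeeping and the approximate minimum over a finite list) is sound and is essentially the paper's Step 2.

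The gap is in the extension step, and it is precisely the step you dismiss as routine. The componentwise McShane--Whitney formula $g_\phi^l(x)=\min_j\bigl[\phi^l(x_j)+L\nrm{x-x_j}\bigr]$ makes each \emph{component} $L$-Lipschitz, but the resulting map into $\R^m$ with the Euclidean norm is only guaranteed to be $\sqrt{m}\,L$-Lipschitz, and this inflation is real for $m \ge 2$. Projection onto $\ball_M$ is nonexpansive, so it cannot repair this: your net elements $g_\phi$ need not belong to $\mathcal U$. That is fatal twice over: $J$ is a functional on $\mathcal U$ only, so $J[g_\phi]$ is not even defined, and the conclusion requires $\kappa^\eps \in \mathcal U$. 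What is needed is a Lipschitz-constant-preserving extension for \emph{vector-valued} maps between Euclidean spaces, i.e.\ a Kirszbraun-type theorem, and that is exactly where the constructive difficulty sits, since the classical proofs of Kirszbraun rely on Zorn's lemma or compactness arguments. The paper spends its main effort here: it snaps values to a rational mesh and extends via Brehm's extension theorem (a constructive, piecewise-congruent Kirszbraun), using Lemma 4.1 of Beeson on decidability of equality for algebraic numbers to make the geometric construction effective. Your proof is essentially correct as written for $m=1$; for general $m$ the McShane step must be replaced by such a constructive Kirszbraun/Brehm extension, and this replacement is the actual mathematical content of the paper's Step 1.
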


\begin{proof}(\textit{Outline})
	\textbf{Step 1: $\mathcal U$ is totally bounded}.
	To effectively construct an approximate optimizer $\kappa^\eps$, that delivers an approximate optimal value of the cost functional $J$ up to a prescribed precision $\eps$, we need first to ensure that $\mathcal U$ is totally bounded.
	It suffices to show that the subsets $Y : =\left\{ \left(\kappa(x_{1}), \dots, \kappa(x_N)\right): \kappa \in \mathcal U \right\}$ of $\R^N$ with the product metric are totally bounded for any finite set $\X_0=\{x_1, \dots, x_N\}$ of distinct points in $\X$.
	Then, we apply the constructive Arzela–Ascoli’s lemma \cite[p. 100]{Bishop1985-constr-analysis} to conclude that $\mathcal U$ is totally bounded.
	To show $Y$ is totally bounded for a fixed $\X_0$, let $\kappa \in \mathcal U$ be arbitrary.
	Let $K$ be the common bound on the functions in $\mathcal U$ in the sense of: $\forall \kappa \in \mathcal U \spc \nrm{\kappa} \le K$.
	We construct, inductively over $\X_0$, for any prescribed precision $\delta>0$, a piece-wise linear function $\kappa_0$ so that $\kappa_0$ is within $\delta$ to $\kappa$ at $\X_0$ and has the Lipschitz constant $L$ -- the common one for all the functions in $\mathcal U$.
	The idea is to carefully choose a mesh $\mathbb K_0$ on $\ball_K$ -- where $K$ is the common bound on the functions in $\mathcal U$ in the sense of: $\forall \kappa \in \mathcal U \spc \nrm{\kappa} \le K$ -- so as to approximate $\kappa$ by $\kappa_0$ up to the desired precision, while $\kappa_0$ takes values precisely at the nodes of the said mesh.
	After the values of $\kappa_0$ were determined on $\X_0$, we need to extend it to the whole space $\X$.
	To do that, we apply the geometric construction known as the Brehm's extension theorem \cite{Akopyan2008}.
	This theorem applies constructively provided that the points in $\X_0$ and $\mathbb K_0$ possess solely rational coordinates (which may always be assumed) whence all the involved geometric transformations are algebraic \ie they map points with algebraic coordinates to points with algebraic coordinates.
	The trick is to use Lemma 4.1 from \cite[p.~8]{Beeson1980}, which allows to decide whether $x = y$ or $x \ne y$ for arbitrary algebraic numbers $x, y$.
	\textbf{Step 2: approximate optimizer}.
	We construct a desired $\kappa^\eps$ by splitting $\mathcal U$ into a finite set of piece-wise linear functions, picking a minimal one, and claiming the desired property $J[\kappa^\eps] - \eps \le \inf_{\mathcal U} J$ using the continuity modulus of $J$.
\end{proof}

\begin{crl}(Smooth $\eps$-optimizers)
	If all the functions in $\mathcal U$ have equi-bounded derivatives up to order $d$, an approximate optimizer may be found smooth up to order $d$ as well.
\end{crl}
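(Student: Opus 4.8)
The plan is to upgrade the construction behind Theorem~\ref{thm:aEVT-fnc} so that the approximate optimizer it returns is genuinely $C^d$ rather than merely piece-wise linear. The decisive observation is that mollification interacts well with \emph{equi-bounded} derivatives but badly with piece-wise linear surrogates. If $\phi_\sigma$ is a fixed smooth, nonnegative, compactly supported mollifier of width $\sigma$ with $\int \phi_\sigma = 1$, then for a genuine $\kappa \in \mathcal U$ one has $D^\alpha(\kappa * \phi_\sigma) = (D^\alpha \kappa) * \phi_\sigma$ for every $|\alpha| \le d$, whence $\nrm{D^\alpha(\kappa * \phi_\sigma)} \le \nrm{D^\alpha \kappa}$ stays controlled by the common bound, and likewise the bound $K$ and the Lipschitz constant $L$ are preserved. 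By contrast, convolving the piece-wise linear $\kappa^\eps$ produced in Step~2 throws the derivatives onto $\phi_\sigma$ and inflates derivatives of order $\ge 2$ like $\sigma^{-(|\alpha|-1)}$, so the smoothed surrogate leaves $\mathcal U$ as soon as $d \ge 2$. Hence the first thing I would fix is \emph{what} gets mollified: a genuine, near-optimal element of $\mathcal U$, not its approximant.

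Concretely I would proceed in three steps. First, I invoke Step~1 of Theorem~\ref{thm:aEVT-fnc} for total boundedness of $\mathcal U$; together with uniform continuity of $J$ this makes $J[\mathcal U]$ totally bounded, so $m \triangleq \inf_{\mathcal U} J$ is a computable real, and the piece-wise linear minimizer of Step~2 sits within a prescribed sup-distance of the genuine $C^d$ element $\kappa_0 \in \mathcal U$ it was built to approximate; passing to that element costs only $\omega_J$ of that distance, so I can arrange $J[\kappa_0] \le m + \eps/2$. Second, I set $\tilde\kappa \triangleq \kappa_0 * \phi_\sigma$; by the commutation identity above $\tilde\kappa$ is $C^\infty$ and respects every common derivative bound up to order $d$, so $\tilde\kappa \in \mathcal U$. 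Third, since $\nrm{\tilde\kappa - \kappa_0} \le \omega(\sigma)$ with $\omega$ the common modulus of continuity of $\mathcal U$, the width $\sigma$ is chosen from the continuity modulus of $J$ so that $\abs{J[\tilde\kappa] - J[\kappa_0]} \le \eps/2$; combining gives $J[\tilde\kappa] - \eps \le m = \inf_{\mathcal U} J$ with $\tilde\kappa$ smooth up to order $d$, as claimed.

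The main obstacle is the boundary: the convolution samples $\kappa_0$ on a $\sigma$-neighborhood of $\X$, so I must first extend $\kappa_0$ off the compact set $\X$ while keeping its derivatives up to order $d$ bounded. For $\X$ a hypercube (the case $\cube_r$) a reflection extension does this with the bounds preserved up to a factor depending only on $n$ and $d$; for general compact $\X$ a constructive Whitney-type extension is needed, and one then accepts membership in $\mathcal U$ with bounds enlarged by a universal constant, which is harmless for a statement asserting only smoothness. Every ingredient remains constructive: the integral $\int \kappa_0(x-y)\phi_\sigma(y)\diff y$ is computed by error-controlled Riemann sums on the computable $\kappa_0$, and the choice of $\sigma$ is a finite search driven by the two moduli of continuity, so no nonconstructive selection is invoked.

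An alternative I would keep in reserve, closer to the original proof, is to rebuild the finite net of Step~1 out of $C^d$ rational-coefficient polynomial splines of degree exceeding $d$, interpolating the whole $d$-jet of each $\kappa$ at the rational mesh points. This stays constructive because the spline coefficients solve linear systems with rational data, so Beeson's Lemma~4.1 \cite{Beeson1980} still decides the equalities needed for the extension, and it sidesteps the boundary extension entirely, at the price of a more delicate simultaneous control of all derivatives up to order $d$ during interpolation.
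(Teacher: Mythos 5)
You take a genuinely different route from the paper, and the motivation you give for the deviation is where the problem lies. The paper's proof is exactly the step you reject: run the construction of Theorem~\ref{thm:aEVT-fnc} unchanged and apply a smooth mollifier to the resulting piece-wise linear minimizer (details in the appendix of \cite{Osinenko2018-constr-aEVTfnc}). This suffices because the corollary asserts only that an approximate optimizer can be found \emph{smooth} up to order $d$; it does not assert that the optimizer inherits the equi-derivative bounds of $\mathcal U$. Convolving the piece-wise linear function (Lipschitz constant $L$, bound $K$) with a mollifier yields a $C^\infty$ function, still $L$-Lipschitz and $K$-bounded, whose $J$-value is within a modulus-controlled amount of the near-optimal value; nothing more is needed. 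Your objection that the mollified surrogate ``leaves $\mathcal U$ as soon as $d \ge 2$'' is aimed at a stronger statement than the one being proved, and even under that stronger reading the blow-up you compute is not intrinsic: the piece-wise linear minimizer is, by construction, within a prescribed sup-distance $\delta$ of a genuine $C^d$ element of $\mathcal U$, so away from a $\sigma$-collar of the boundary the derivatives of its mollification exceed the common bounds only by $O(\delta/\sigma^{|\alpha|})$, and $\delta$ is a free parameter of the construction that may be fixed \emph{after} $\sigma$.

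As for your own construction: it is workable, but it pays a cost that the paper's route avoids, and it is exactly the cost you underplay. Mollifying the genuine element $\kappa_0$ requires its values on a $\sigma$-neighbourhood of $\X$, hence an extension preserving all derivative bounds up to order $d$. For a hypercube your reflection argument is fine; for general compact $\X$ --- which Theorem~\ref{thm:aEVT-fnc} permits --- you appeal to a ``constructive Whitney-type extension'' that you do not supply, and constructivizing Whitney extension is a substantial task, not a remark. The paper's route needs only a Lipschitz extension of a piece-wise linear function, which is constructively unproblematic (McShane-type, or the Brehm machinery already used in Step~1). Note finally that your extension inflates the common bounds by a constant, so your output also fails exact membership in $\mathcal U$ --- the very defect you charge the paper's proof with --- as you yourself concede, at which point the ``decisive observation'' driving the whole deviation no longer differentiates the two approaches. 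In summary: your argument is a valid alternative proof for hypercube domains, and it buys near-preservation of the derivative bounds as a bonus, but the rejection of the paper's simpler argument is unfounded, and for general compact $\X$ your proof has a gap at the extension step.
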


\begin{proof}
	The construction is the same as above, while simply applying a smooth molifier to the resulting piece-wise linear function (see \cite[Appendix]{Osinenko2018-constr-aEVTfnc}).
\end{proof}

\begin{rem}
	The set $\X$ may be just totally bounded, not necessarily compact.
	The theorem trivially applies for product spaces \eg if we augment the domain of control policies to be $\X \times [0, T], T>0$ and assume equi-Lipschitzness and -boundedness of $\mathcal U$ in the second (time) argument.
	In fact, $\kappa \in \mathcal U$ may have jump discontinuities in time so long as they are at fixed points which may be interpreted as time samples (more on sample-and-hold systems down below).
\end{rem}

The justification of Theorem \ref{thm:aEVT-fnc} is that physically every signal is bounded and has a finite rate of change.
For a generic optimal control problem $\min_{\kappa \in \mathcal U} J[\kappa] \spc \sut \D x = f(x, u)$, where $\D$ is a suitable differential operator, we can apply Theorem \ref{thm:aEVT-fnc} if $\mathcal{U}$ is a located subset of a space of all equi-Lipschitz and equi-bounded functions (with the aid of \cite[Lemma~4.3]{Ye2011-SF}).
Otherwise, if the system is \eg input-to-state stable, we have $\forall t \spc \forall \kappa \in \mathcal U \spc \nrm{x(t)} \le \alpha(\nrm{x(0), t}) + \beta(\nrm{\kappa})$ with $\alpha$ of class $\mathcal {KL}$ and $\beta$ -- of $\mathcal K$.
In this case, we may derive a uniform bound on $\nrm{x}$ for all policies in $\mathcal U$, using their common bound, and apply Theorem \ref{thm:aEVT-fnc} directly.
Further examples of applications of this constructive theorem include dynamic programming and reinforcement learning and may be found in \cite{Osinenko2018-constr-aEVTfnc}.
%
%

Now, consider a general problem of stabilization using a control Lyapunov function (CLF).
First off, existence of a smooth CLF is rarely the case \cite{Sontag1990-stabilization-survey}.
For instance, the most of the computationally obtained ones are non-smooth \cite{Giesl2015-review}.
Starting with a non-smooth CLF, the stabilization can be practical at best -- to mean convergence to any desired small vicinity of the equilibrium.
At the same time, to avoid problems with the existence and uniqueness of system trajectories, a control policy $\kappa$ is usually sampled to get a sample-and-hold system $\D x = f(x, \kappa^\eta(x)), \kappa^\eta(x(t)) :\equiv \kappa(x(k \eta)), t \in [k \eta, (k+1) \eta]$.
Almost all stabilization techniques in this case use an optimization at each sampling time step to compute $\kappa^\eta$.
So are \eg Dini aiming, steepest descent feedback and optimization-based feedback, inf-convolution feedback \cite{Braun2017-SH-stabilization-Dini-aim}.
To show practical stabilization, the major focus is to determine an upper bound on the sampling time $\eta$ \cite{Clarke2011-discont-stabilization}.
When it comes to robustness, system, actuator and measurement noise were addressed \cite{Sontag1999-stabilization-disturb}.
However, the involved optimization was always assumed exact and so computational uncertainty was neglected, which might pose problems.
Recently, practical stabilization was shown under approximate optimizers \cite{Osinenko2018-pract-stabilization} (see Section \ref{sec:cases} for a discussion on related case studies).
It should be noted here that explicit account for inexact optimization turned out not to be as trivial, as one might have suspected.

\section{Danskin's theorem} \label{sec:Danskin}

In this section, we constructively study the famous Danskin's theorem, which is foundational in adversarial robustness \cite{maini2020adversarial} and is used in certain reinforcement learning methods \cite{kolaric2020local}.
The Danskin's theorem is closely related to the extremum value theorems and its classical proofs heavily use sequential compactness arguments.
Here, we prove its constructive version using approximate optimizers.
This poses the major difference to the classical theorem whose statement is based on exact optimizers.
The idea of the proof is to work directly with the sets of approximate optimizers instead of resorting to sequential compactness.

We will use the following \textit{directional super-derivative} (of a function $\psi: \R^n \ra \R$ in direction of a vector $v$) will be used:
	$\D^+_v \psi(x) \triangleq \limsup_{\eps \sea 0} \tfrac{\psi(x + \eps v) - \psi(x)}{\eps}$.
By analogy, $\liminf$ in the above will be used for the \textit{directional sub-derivative} $\D^-_v \psi(x)$.
If both coincide, the common limit is simply the directional derivative $\D_v \psi(x)$.
\begin{thm}[Constructive Danskin's theorem]
	\label{thm:Danskin}
	Consider a continuously differentiable function $\varphi: \X \times \Theta \ra \R$ with $\X \subseteq \R^n$, compact $\Theta \subset \R^p$.
	Let $\psi: \X \ra \R$ be defined by $\psi(x)=\max_{\theta \in \Theta} \varphi(x, \theta)$.
	Suppose $E^\delta_\Theta(x) := \{ \theta \in \Theta: \abs{\varphi(x, \theta) - \psi(x)} \le \delta \}$ -- the sets of $\delta$-optimizers of $\varphi$ at $x$ -- are totally bounded for any $x, \delta >0$.	
	Then, $\psi$ is continuous with the same modulus as $\varphi$ \wrt $x$ and the directional derivative of it satisfies:
	\begin{equation}
		\label{eqn:Danskin}
		\begin{array}{l}
			\forall x \spc \forall v \in \R^n \spc \forall \delta > 0 \spc \D_v \psi(x) = \max_{\theta \in E^\delta_\Theta(x)} \D_v \varphi(x, \theta), \\
			\forall \theta^\delta(x) \in E^\delta_\Theta(x) \spc \abs{ \D_v \psi(x) - \D_v \varphi(x, \theta^\delta(x)) } \le \delta.
		\end{array}	 
	\end{equation} 
\end{thm}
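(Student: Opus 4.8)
The plan is to prove the three claims in turn, throughout replacing the classical extraction of a convergent subsequence of exact maximizers by finite-mesh computations over the totally bounded sets $E^\delta_\Theta(x)$. For the continuity of $\psi$ I would first note that, since $\Theta$ is compact and $\varphi(x,\cdot)$ is uniformly continuous, the constructive supremum defining $\psi(x)$ exists. For the modulus I would use the constructively valid bound $\sup_\theta \varphi(x,\theta) - \sup_\theta \varphi(x',\theta) \le \sup_\theta\left(\varphi(x,\theta) - \varphi(x',\theta)\right)$ together with its symmetric counterpart, giving $\abs{\psi(x) - \psi(x')} \le \sup_{\theta \in \Theta}\abs{\varphi(x,\theta) - \varphi(x',\theta)}$. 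Since $\varphi$ is continuous in $x$ with a modulus $\omega_\varphi$ uniform in $\theta$ (available from $\varphi \in C^1$ and compactness of $\Theta$), the right-hand side is $\le \eps$ whenever $\nrm{x - x'} \le \omega_\varphi(\eps, \cdot)$, so $\psi$ inherits exactly the modulus $\omega_\varphi$.

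For the derivative formula I would bound $\D^+_v\psi(x)$ and $\D^-_v\psi(x)$ separately and show they coincide. For the upper bound, fix a step $\eps>0$ and compute, over a finite mesh of the totally bounded $\Theta$, a point $\theta_\eps$ with $\varphi(x + \eps v, \theta_\eps) \ge \psi(x + \eps v) - \eps^2$. Since $\psi(x) \ge \varphi(x,\theta_\eps)$, the constructive fundamental theorem of calculus for the $C^1$ map $s \mapsto \varphi(x + sv, \theta_\eps)$ yields $\psi(x+\eps v) - \psi(x) \le \int_0^\eps \scal{\nabla_x\varphi(x+sv,\theta_\eps), v}\diff s + \eps^2$. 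Continuity of $\psi$ and of $\varphi$ force $\theta_\eps$ into $E^{\delta(\eps)}_\Theta(x)$ with $\delta(\eps)\sea 0$, and the modulus of $\nabla_x\varphi$ lets me replace $s$ by $0$ inside the integral up to a term vanishing with $\eps$. Dividing by $\eps$ and taking $\limsup$ then gives $\D^+_v\psi(x) \le \lim_{\delta \sea 0}\max_{\theta \in E^\delta_\Theta(x)}\D_v\varphi(x,\theta)$, the limit existing because $\delta \mapsto \max_{E^\delta}\D_v\varphi(x,\cdot)$ is monotone and bounded, with a rate supplied by the modulus of $\nabla_x\varphi$.

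For the matching lower bound I would use total boundedness of $E^\delta_\Theta(x)$ directly: over a finite mesh of $E^\delta_\Theta(x)$ I compute a $\theta_0$ nearly attaining $\max_{\theta \in E^\delta_\Theta(x)}\D_v\varphi(x,\theta)$ — this is the step that replaces the classical sequential-compactness argument. Since $\theta_0 \in E^\delta_\Theta(x)$ we have $\psi(x) \le \varphi(x,\theta_0)+\delta$, so the fundamental theorem of calculus gives $\psi(x+\eps v) - \psi(x) \ge \int_0^\eps \scal{\nabla_x\varphi(x+sv,\theta_0),v}\diff s - \delta$, whence the difference quotient is $\ge \D_v\varphi(x,\theta_0) - \omega_{\nabla\varphi}(\eps) - \delta/\eps$. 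Coupling the tolerances as $\delta \ll \eps$ (\eg $\delta = \eps^2$) and letting $\eps \sea 0$ shows $\D^-_v\psi(x) \ge \lim_{\delta\sea 0}\max_{E^\delta}\D_v\varphi(x,\cdot)$. Since $\D^-_v\psi \le \D^+_v\psi$ always, all three quantities agree, which simultaneously proves that $\D_v\psi(x)$ exists and equals the stated max; the second line of \eqref{eqn:Danskin} then follows because any $\theta^\delta(x) \in E^\delta_\Theta(x)$ has $\D_v\varphi(x,\theta^\delta(x))$ differing from $\D_v\psi(x)$ by at most the oscillation of $\D_v\varphi(x,\cdot)$ over $E^\delta_\Theta(x)$, a quantity controlled by $\delta$ through the modulus of $\nabla_x\varphi$ and the diameter of the $\delta$-optimizer set.

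The delicate point is the coupling of the two tolerances. The lower bound carries a $\delta/\eps$ term that forces $\delta$ to shrink faster than $\eps$, while the upper bound only places the computed maximizer $\theta_\eps$ in $E^{\delta(\eps)}_\Theta(x)$ with $\delta(\eps)$ governed by the continuity moduli; reconciling these so that both one-sided derivatives sandwich the single limit $\lim_{\delta\sea 0}\max_{E^\delta}\D_v\varphi(x,\cdot)$ — while keeping every maximization a genuinely computable finite-mesh operation rather than a convergent-subsequence extraction — is where the constructive content, and the total-boundedness hypothesis on $E^\delta_\Theta(x)$, is really used.
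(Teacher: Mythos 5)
Your proof architecture is the same as the paper's: establish continuity of $\psi$ from near-optimizer inequalities, then sandwich $\D^+_v\psi(x)$ and $\D^-_v\psi(x)$ between maxima of $\D_v\varphi(x,\cdot)$ over approximate-optimizer sets, with every sequential-compactness step replaced by a finite-mesh search over a totally bounded set. The continuity part is fine. The substantive divergence is in how the two tolerances are coupled, and it changes what is proved. Because your lower bound takes $\delta$-optimizers at $x$ with $\delta=\eps^2$ (to suppress the $\delta/\eps$ penalty), the set over which you may maximize shrinks with $\eps$, and your sandwich closes only around the limit; what you actually prove is
\[
\D_v\psi(x) = \lim_{\delta \sea 0}\, \max_{\theta \in E^\delta_\Theta(x)} \D_v\varphi(x,\theta).
\]
The theorem asserts more: equality with $\max_{\theta \in E^\delta_\Theta(x)}\D_v\varphi(x,\theta)$ at every \emph{fixed} $\delta>0$ (so that this max is $\delta$-independent), together with $\abs{\D_v\psi(x)-\D_v\varphi(x,\theta^\delta(x))}\le\delta$ for every $\delta$-optimizer. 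The paper's proof, by contrast, keeps $\delta$ fixed throughout and scales only the optimizer tolerance (it works with $\eps\delta$-optimizers), which is how it arrives at a lower bound over all of $E^\delta_\Theta(x)$ rather than over a set shrinking with $\eps$.

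The genuine gap is your attempt to bridge this difference in the closing paragraph. You bound $\abs{\D_v\psi(x)-\D_v\varphi(x,\theta^\delta(x))}$ by the oscillation of $\D_v\varphi(x,\cdot)$ over $E^\delta_\Theta(x)$ and claim this oscillation is controlled by $\delta$; it is not, because the diameter of $E^\delta_\Theta(x)$ need not be $O(\delta)$. Concretely, take $n=p=1$, $\Theta=[-1,1]$, $\varphi(x,\theta)=x\theta-\theta^2$, $v=1$: then $\psi(x)=x^2/4$ near $0$, so $\D_v\psi(0)=0$, while $E^\delta_\Theta(0)=[-\sqrt\delta,\sqrt\delta]$ and $\max_{\theta\in E^\delta_\Theta(0)}\D_v\varphi(0,\theta)=\sqrt\delta$. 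All hypotheses hold (each $E^\delta_\Theta(x)$ is an interval, hence totally bounded), yet both displayed lines of \eqref{eqn:Danskin} fail at fixed $\delta$, and only your limit formulation survives. So the step you are missing cannot be filled in: it is the fixed-$\delta$ form of the claim that breaks down, and the place where the paper's own argument commits to it is the passage declaring $\theta^{\eps\delta}(x)$ ``arbitrary from $E^\delta_\Theta(x)$'' --- an $\eps\delta$-optimizer is arbitrary only in $E^{\eps\delta}_\Theta(x)$, and carrying the max over that smaller set through the $\liminf$ yields precisely your limit statement, not the stated one. Separately, repair one constructive point in your own text: the existence of $\lim_{\delta\sea 0}\max_{E^\delta_\Theta(x)}\D_v\varphi(x,\cdot)$ does not follow from ``monotone and bounded'' (that principle is not constructively valid), and no convergence rate is supplied by the modulus of $\nabla_x\varphi$ alone; the honest constructive output of your argument is the pair of $\delta$-indexed one-sided estimates, with the existence of $\D_v\psi(x)$ obtained as the conclusion of the sandwich rather than presupposed via a monotone-limit argument.
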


\begin{proof}
	For the continuity part, let $\mu^x_\varphi$ be the continuity modulus of $\varphi$ \wrt $x$ to mean: $\forall \theta \in \Theta \spc \varphi(x,\theta) - \varphi(y,\theta) \le \mu^x_\varphi(\nrm{x-y})$.
	In particular, the latter holds with $\theta^\delta(x)$, a $\delta$-optimizer for an arbitrary $\delta>0$, in place of $\theta$.
	Observe that $\varphi(x,\theta^\delta(x)) - \varphi(y,\theta^\delta(x)) \ge \varphi(x,\theta^\delta(x)) - \max_{\theta \in \Theta} \varphi(y, \theta)$.
	So, $\varphi(x,\theta^\delta(x)) - \max_{\theta \in \Theta} \varphi(y, \theta) \le  \mu^x_\varphi(\nrm{x-y})$.
	Now, using a $\delta$-optimizer at $y$, we have: $\varphi(x,\theta^\delta(x)) - \varphi(y,\theta^\delta(y)) \le \mu^x_\varphi(\nrm{x-y}) + \delta$.
	Thus, $\psi(x) - \psi(y) \le \mu^x_\varphi(\nrm{x-y}) + \delta + 2 \delta$, where the last two $\delta$s relate $\psi(x), \psi(y)$ to their respective $\delta$-maximal values.
	Reversing the order of $x, y$ and observing that $\delta$ was arbitrary, it follows that $\abs{\psi(x) - \psi(y)} \le \mu^x_\varphi(\nrm{x-y})$ as required.
	Fix an $x, v, \delta>0$ and observe the following:
	\begin{equation}
	\label{eqn:Danskin-delta-over-2}
		\tfrac{\psi(x+\eps v) - \psi(x)}{\eps} \le \tfrac{\varphi(x + \eps v, \theta^{\eps \nicefrac \delta 2}(x + \eps v)) - \psi(x)}{\eps} + \tfrac \delta 2
	\end{equation}
	for $\theta^{\eps \nicefrac \delta 2}(x + \eps v) \in E^{\eps \nicefrac \delta 2}_\Theta(x + \eps v)$.	
	Assume \wlg that $\eps \le 1$ whence $\forall x \spc \forall \delta >0 \spc E^{\delta \eps}_\Theta(x) \subseteq E^{\delta}_\Theta(x)$ and, therefore,
	\begin{equation}
	\label{eqn:Danskin-max-quotient}
		\tfrac{\varphi(x + \eps v, \theta^{\eps \nicefrac \delta 2}(x + \eps v)) - \psi(x)}{\eps} \le \max_{\theta \in E^{\nicefrac \delta 2}_\Theta(x + \eps v)} \tfrac{\varphi(x + \eps v, \theta) - \psi(x)}{\eps}
	\end{equation}
	by the definition of maximum.
	Since $\varphi$ is continuous, we can always pick $\eps$ small enough (possibly depending on $x, v$) that $E^{\nicefrac \delta 2}_\Theta(x + \eps v) \subseteq E^{\delta}_\Theta(x)$ and so:
	\begin{equation}
	\label{eqn:Danskin-maxima}
		\max_{\theta \in E^{\nicefrac \delta 2}_\Theta(x + \eps v)} \tfrac{\varphi(x + \eps v, \theta) - \psi(x)}{\eps} \le \max_{\theta \in E^{\delta}_\Theta(x)} \tfrac{\varphi(x + \eps v, \theta) - \psi(x)}{\eps}.
	\end{equation}		
	Therefore, combining \eqref{eqn:Danskin-delta-over-2}, \eqref{eqn:Danskin-max-quotient} and \eqref{eqn:Danskin-maxima}, we obtain:
	\begin{equation}
	\label{eqn:Danskin-quotient-max-delta}
		\tfrac{\psi(x+\eps v) - \psi(x)}{\eps} \le \max_{\theta \in E^{\delta}_\Theta(x)} \tfrac{\varphi(x + \eps v, \theta) - \psi(x)}{\eps} + \tfrac \delta 2.
	\end{equation}
	Now, observe that, in general, for any $\delta>0$, $\psi(x) \le \max_{\theta \in E^{\delta}_\Theta(x)} \varphi(x, \theta) + \delta$ and, since $\delta$ is arbitrary, $\psi(x) \le \max_{\theta \in E^{\delta}_\Theta(x)} \varphi(x, \theta)$. 	
	By the same token, \eqref{eqn:Danskin-quotient-max-delta} actually reduces to
	\begin{equation}
	\label{eqn:Danskin-quotient-max}
		\tfrac{\psi(x+\eps v) - \psi(x)}{\eps} \le \max_{\theta \in E^{\delta}_\Theta(x)} \tfrac{\varphi(x + \eps v, \theta) - \psi(x)}{\eps}.
	\end{equation}	
	Observe that $\forall x,\theta -\psi(x) \le -\varphi(x, \theta)$, so:
	\begin{equation}
		\label{eqn:Danskin-maxmax}
		\max_{\theta \in E^{\delta}_\Theta(x)} \tfrac{\varphi(x + \eps v, \theta) - \psi(x)}{\eps} \le \max_{\theta \in E^{\delta}_\Theta(x)} \tfrac{\varphi(x + \eps v, \theta) - \varphi(x, \theta)}{\eps}.
	\end{equation}
	Since $\varphi$ is continuously differentiable, for any $\eps_1>0$ there is an $\bar \eps > 0$ \sut
	\begin{equation}
		\label{eqn:Danskin-eps1}
		\forall \eps \le \bar \eps \spc \forall \theta \in \Theta \spc \tfrac{\varphi(x + \eps v, \theta) - \varphi(x, \theta)}{\eps} \le \D_v \varphi(x, \theta) + \eps_1.
	\end{equation}
	Applying maximum on both sides yields, for $\eps \le \bar \eps$:
	\begin{equation}
		\label{eqn:Danskin-max-der}
		\max_{\theta \in E^{\delta}_\Theta(x)} \tfrac{\varphi(x + \eps v, \theta) - \varphi(x, \theta)}{\eps} \le \max_{\theta \in E^{\delta}_\Theta(x)} \D_v \varphi(x, \theta) + \eps_1
	\end{equation}
	Combining, \eqref{eqn:Danskin-quotient-max}, \eqref{eqn:Danskin-maxmax}, \eqref{eqn:Danskin-eps1}, \eqref{eqn:Danskin-max-der} yields, for $\eps \le \bar \eps$:
	\begin{equation}
		\label{eqn:Danskin-psi-der}
		\tfrac{\psi(x+\eps v) - \psi(x)}{\eps} \le \max_{\theta \in E^\delta_\Theta(x)} \D_v \varphi(x, \theta) + \eps_1.
	\end{equation}	
	Applying $\limsup_{\eps \sea 0}$ on both sides (which acts trivially on the right-hand side) and observing that $\eps_1$ was arbitrary, conclude that
	\begin{equation}
		\label{eqn:Danskin-final-above}
		\D^+_v \psi(x) \le \max_{\theta \in E^\delta_\Theta(x)} \D_v \varphi(x, \theta).
	\end{equation}
	For the other direction, observe that
	\[
		\forall \theta \in \Theta \spc \tfrac{\psi(x+\eps v) - \psi(x)}{\eps} \ge \tfrac{\varphi(x + \eps v, \theta) - \varphi(x, \theta^{\eps \delta}(x))}{\eps} - \delta.
	\]
	So, in particular,
	\[
		\tfrac{\psi(x+\eps v) - \psi(x)}{\eps} \ge \tfrac{\varphi(x + \eps v, \theta^{\delta \eps}(x)) - \varphi(x, \theta^{\eps \delta}(x))}{\eps} - \delta.
	\]
	But since $\theta^{\eps \delta}(x)$ was arbitrary from $E^{\delta}_\Theta(x)$ (provided that $\eps < 1$) we may write $\max_{\theta \in E^{\delta}_\Theta(x)}$ in front of the quotient in the right-hand side of the above.
	The rest of the argument is the same as before, but applying $\liminf$ instead of $\limsup$ and noticing that $\delta$ was arbitrary, yielding
		$\D^-_v \psi(x) \ge \max_{\theta \in E^\delta_\Theta(x)} \D_v \varphi(x, \theta)$.
	Combining this with \eqref{eqn:Danskin-final-above} gives the result.
\end{proof}





\section{Selector theorems}\label{sec:selector}

Selector theorems refer to extraction of ordinary functions (called \textit{selectors}) out of set-valued functions and are ubiquitous in control engineering, especially in constructing system trajectories in cases when the right-hand side of the system description is time- or state-discontinuous.
In particular, Filippov solutions, which are often standard to describe system trajectories in such cases \eg in sliding mode systems \cite{levant2016discretization}, are constructed essentially using measurable selectors \cite{Aubin2012-diff-incl}.
For a dynamical system described by a differential inclusion $\D x \in F(t, x(t)), x(0)=x_0$, where $F$ is a set-valued map \eg the Filippov map, trajectories can be constructed, under certain conditions on $F$, via iterations of the kind $x_{i+1}(t) = x_0 + \int_a^t v_i(\tau) \diff \tau$ with $v$s being measurable selectors extracted from $F(\bullet, x_i( \bullet ))$.
Selectors are also used in optimal control problems, including dynamic programming, viability theory, robust stabilization and related fields.
Aubin \cite{Aubin2011-viab-thr} stressed that the selector theorems were not constructive and so there is no actual algorithm to compute selectors. 
It turns out that under certain conditions on the respective set-valued functions, continuous selectors can actually be found constructively  \cite[Chapter~4]{Waaldijk1996-intuit-top}.

Recently, we showed that extraction of measurable selectors could also be made constructive \cite{Osinenko2018-constr-sel-thm}.
Whereas the full details can be found in the related work, let us outline the result in this section.
Let a \textit{block} be a not necessarily non-empty (closed) hyperrectangle with rational vertices in $\R^n$.
Let unions of blocks be called \textit{generalized blocks}.
For a generalized block $\B = \bigcup_i \B_i$, if any block $A \in \R^n$ intersects with only finitely many $\B_i$s, $\B$ is called \textit{locally finitely enumerable} (or just finitely enumerable, if $\B$ is finite as a sequence).
Notice, when dealing with $\B = \bigcup_i \B_i$, we always assume the underlying sequence $\{\B_i\}_i$ be available.
If $\B$ is locally finitely enumerable and $\{\B_i\}_i$ are disjoint, the generalized block is called proper.
Let $\B = \bigcup_i \B_i$ be a locally finitely enumerable generalized block, and $A$ be a block.
Define a map $\mu_A(\B) \triangleq \sum_i \mu(\B_i \cap A )$ where $\mu(\B_i \cap A)$ is the volume of the respective hyperrectangle (possibly empty) resulting from the intersection.
Notice the map $\mu$ generalizes the definition \cite[Chapter~6]{Ye2011-SF}, and thus is not treated as the classical Lebesgue measure.

Fix a compact set $\X \subset \R^n$.
Hence, there exists a block $A$ which contains $\X$.
If a generalized block $\B = \{\B_i\}_i$ is such that each $B_i$ is inside $\X$, we simply write $\mu(\B)$ meaning a suitable ambient bounding block $A$.

We say a sequence of generalized blocks $\mathcal E = \{\B^j\}_j$ is a \textit{representable domain} in $\X$ if for any $\eps > 0$, there exists another generalized block $\J \in \X$ with $\mu(\J) \le \eps$ \sut $\partial_{\mathcal E} \Subset \J \sm \partial_{\mathcal \J}$, and $\X \sm \J$ exists and satisfies $\X \sm \J \subseteq {\mathcal E}$, where $\partial_{\mathcal E}$ is the generalized block which is the union of the boundaries of all $\B^j_i$s and $A \Subset B$ means (constructively) well-contained \ie $\exists \lambda > 0 \spc A + \lambda \subseteq B$.

We will consider in the following measurable (single- and set-valued) functions whose domains are proper, representable, and whose values are located.

The idea behind representable domains is that they entail an algorithm which yields ``arbitrarily thin'' generalized blocks in a way that these domains cover the total space minus the said generalized blocks.
Noticing $F(x)$ are located, let us introduce the following definition.

\begin{dfn}[Representable inverse]
	\label{dfn:represent-inverse}
	A set-valued function $F: \X \rra \R$ with a domain $\cup_i \B_i$ is said to have representable inverse if for any finite sequence $\{r_i\}_i^N$ and $r>0$,	$\cup_{i \le N} \{ x \in \X : \nrm{r_i - F(x)} \le r \}$ is representable.
\end{dfn}
\begin{dfn}[Simple set-valued function]
\label{dfn:simple-SVF}	
	A set-valued function $F: \X \rra \R$ with a domain $\cup_i \B_i$ whose values on each $\B_i$ are finitely enumerable generalized blocks is called simple.
\end{dfn}
\begin{dfn}[Regular set-valued function]
\label{dfn:regular-SVF}	
	A set-valued function $F: \X \rra \R$ with a domain $\cup_i \B_i$ defined as
	\[
		\forall i \spc \forall x \in \B_i \spc F(x) = \cup_{k=1}^{N_i} \{ y: \alpha^i_k(x) \le y \le \beta^i_k(x) \}, N_i \in \N
	\]
	with continuous $\alpha^i_k(x), \beta^i_k(x)$ is called regular.
\end{dfn}
A regular set-valued function is a one whose image on each separate $\B_i$ is a finite set of ``chunks'' with boundaries in the form of continuous functions (such a description is fairly general).
Finally, we will need the so called countable reduction, which converts a sequence of generalized blocks in a sequence of proper ones (cf. \cite[Lemma~2]{Osinenko2018-constr-sel-thm}).
With the introduced machinery at hand, we can state the following constructive approximate measurable selector theorem:
\begin{thm}[Constructive selector extraction \cite{Osinenko2018-constr-sel-thm}]
	\label{thm:meas-sel}
	Let $F: \X \rra \R$ be a regular set-valued function.
	Then, for any $\eps>0$ there exists a measurable function $f: \X \ra \R$ \sut $\nrm{F(x) - f(x)} \le \eps$ on a representable domain. 
\end{thm}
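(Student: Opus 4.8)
The plan is to turn the abstract instruction ``select a point from $F(x)$'' into an explicit finite rational computation on a mesh, and to push every location where that computation is ambiguous into the thin exclusion block that representability provides. First I would apply the countable reduction to replace the domain $\cup_i \B_i$ by a proper (disjoint) generalized block, so that on each $\B_i$ the function $F$ is unambiguously given by its regular description $\cup_{k=1}^{N_i}[\alpha^i_k,\beta^i_k]$. Since $\X$ is compact and every $\alpha^i_k,\beta^i_k$ is continuous, I would extract a common modulus of continuity and a common bound $M$ on all the $\alpha^i_k,\beta^i_k$; then, subdividing each rational block $\B_i$ by rational hyperplanes, I would build a finite rational mesh whose cells $\B_{i,l}$ are small enough that each $\alpha^i_k$ and $\beta^i_k$ oscillates by at most $\eps/2$ across a cell. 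This mesh is the constructive surrogate for pointwise selection.

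On each cell $\B_{i,l}$ I would fix a rational reference point $c_{i,l}$ and form $F(c_{i,l})=\cup_{k}[\alpha^i_k(c_{i,l}),\beta^i_k(c_{i,l})]$. Being a finite union of intervals with computable endpoints inside $[-M,M]$, this set is located, so the distance $\nrm{q-F(c_{i,l})}$ is finitely computable and I can locate, by a finite search over a rational grid of $[-M,M]$, a rational $q_{i,l}$ with $\nrm{q_{i,l}-F(c_{i,l})}\le\eps/2$. Setting $f\equiv q_{i,l}$ on the interior of $\B_{i,l}$ produces a locally finite, piecewise-constant and hence measurable function, and I would take $\mathcal E=\{\B_{i,l}\}$ as its domain.

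Two things then need checking. First, that $\mathcal E$ is a representable domain: the only omitted points lie on the mesh grid, so the exclusion block $\J$ can be taken as a union of thin rational slabs around the grid hyperplanes, whose total volume $\mu(\J)$ is driven below any prescribed bound by thinning the slabs, while $\partial_{\mathcal E}\Subset\J\sm\partial_{\J}$ and $\X\sm\J\subseteq\mathcal E$ hold by construction. Second, the bound $\nrm{F(x)-f(x)}\le\eps$ on $\mathcal E$: since the distance-to-set functional is $1$-Lipschitz in the Hausdorff metric and each endpoint moves by at most $\eps/2$ across a cell, one gets $\nrm{q_{i,l}-F(x)}\le\nrm{q_{i,l}-F(c_{i,l})}+\eps/2\le\eps$, provided no chunk of $F$ collapses inside the cell.

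The main obstacle is precisely that proviso. Constructively one cannot decide $\alpha^i_k(x)\le\beta^i_k(x)$ against $\alpha^i_k(x)>\beta^i_k(x)$, so a chunk may silently become empty inside a cell, and when a thin chunk hands off to a distant one the Hausdorff estimate fails and no single constant $q_{i,l}$ can stay $\eps$-close to $F(x)$. The real content of the proof is therefore to isolate every near-degenerate locus, where some $\beta^i_k-\alpha^i_k$ is within $\eps$ of $0$, and to show, using the regularity of $F$ together with the representable-inverse and countable-reduction machinery, that these loci can be enclosed in the thin block $\J$ (refining the mesh wherever a chunk is \emph{robustly} thick and otherwise discarding the ambiguous chunk into the excluded set). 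Keeping $\mu(\J)$ small while absorbing all such regions, and arranging every mesh-and-endpoint comparison to involve only algebraic or rational data so that the required equalities are decidable in the sense of the Beeson lemma already invoked for Theorem \ref{thm:aEVT-fnc}, is where the bulk of the work lies.
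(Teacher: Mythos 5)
Your construction rests, as you yourself acknowledge, on the claim that every ``near-degenerate locus'' --- the set where some chunk width $\beta^i_k - \alpha^i_k$ is within $\eps$ of zero --- can be swept into the thin exclusion block $\J$. That claim is exactly what is missing, and it cannot be supplied, because it is false as a measure statement: the near-degenerate locus is the preimage of a small interval under the continuous function $\beta^i_k - \alpha^i_k$, and its measure can be as large as that of the whole block no matter how the mesh is refined. Two instances show why your patch (refine where a chunk is robustly thick, otherwise discard the ambiguous chunk) cannot separate the harmless from the fatal case. First, a single chunk with $\alpha^i_k \equiv \beta^i_k$ is near-degenerate everywhere and never robustly thick, yet selection is trivial (follow the curve); your rule would discard the only chunk there is, and the locus to be excluded is the entire block, so $\mu(\J) \le \eps$ is unachievable. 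Second, take two chunks centered on far-apart continuous curves, with width functions $w$ and $-w$, where $\abs{w(x)} = \nrm{x - C}$ for a fat Cantor set $C$ and the sign of $w$ alternates with the generation of the complementary intervals of $C$. At every point at least one chunk is inhabited, but any cell meeting $C$ in positive measure contains complementary intervals of both signs, on which the admissible values differ by a fixed gap; hence no single constant $q_{i,l}$ can satisfy the bound on such a cell, while cells avoiding this behaviour necessarily leave uncovered a set of measure at least $\mu(C)-\eps$, so the resulting domain is not representable. The information needed to decide which chunk to follow is simply not available, to any finite precision, from one reference point per cell; this is a failure of the domain-meshing architecture itself, not of a missing estimate.

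The paper's proof is structured precisely so as to never face this decision. It first approximates the regular $F$ by a \emph{simple} set-valued function $\hat F$ within $\nicefrac{\eps}{2}$ on a representable domain, and then builds the selector by successive refinement in the \emph{value} space rather than by meshing the domain: at stage $k$ it takes a $2^{-(k+1)}$-mesh $\{r^k_i\}$ of $[0,1]$, pulls it back through $\hat F$ via $C^k_i = \{x \in \dom(\hat F) : \abs{r^k_i - \hat F(x)} < 2^{-k}\}$ (these are computable and proper precisely because $\hat F$ is simple and its values located), intersects with the consistency sets $D^k_i = \{x \in \dom(f_{k-1}) : \abs{r^k_i - f_{k-1}(x)} < 2^{-k+1}\}$ so that successive values are Cauchy, applies countable reduction to obtain disjoint proper pieces $Q^k_i$, and sets $f_k :\equiv r^k_i$ there. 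Because membership is always phrased through the distance to the whole set $\hat F(x)$, the algorithm never has to identify which chunk is inhabited; whatever cannot be certified at stage $k$ is automatically relegated to the exclusion that a representable domain permits, measurability holds by construction (piecewise constant on proper generalized blocks), and the final bound is the triangle inequality $\nrm{F(x) - f(x)} \le \nrm{F(x) - \hat F(x)} + \nrm{\hat F(x) - f(x)} \le \eps$. Incidentally, the Beeson lemma on algebraic numbers that you import from the proof of Theorem~\ref{thm:aEVT-fnc} carries no weight here: all decisions in the selector construction are strict-inequality conditions on simple data, resolved by countable reduction, not equality tests on algebraic coordinates.
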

\begin{proof} \textit{(Outline)}
	We may assume \wlg that $F$ maps to a unit interval.
	Observe that we can always approximate $F$ by a simple set-valued function $\hat F$ on a representable domain.
	From now, let us fix $\hat F$ to be such an approximation up to the accuracy $\frac \eps 2$.
	The essence of the proof is the following algorithm, starting with $f_1 :\equiv 0$:
	\begin{enumerate}
		\item for $k \in \{2, \dots, N\}, \nicefrac{1}{2N} \le \nicefrac \eps 2$, generate $\{r^k_i\}_{i \le N_k}$, a $\frac{1}{2^{k+1}}$-mesh on $[0,1]$;
		\item compute the sets $C^k_i, D^k_i, A^k_i$ as follows:
		\begin{equation*}
			\label{eqn:meas-sel-sets}
			\begin{array}{ll}
				C^k_i & := \left\{ x \in \dom(\hat F) : | r^k_i - \hat F(x) | < \tfrac{1}{2^k} \right\}, \\
				D^k_i & := \left\{ x \in \dom(f_{k-1}) : | r^k_i - f_{k-1}(x) | < \tfrac{1}{2^{k-1}} \right\}, \\
				A^k_i & := C^k_i \cap D^k_i.
			\end{array}
		\end{equation*}
		\item compute $\{Q^k_i\}_{i \le N_k}$ by countable reductions of $\{A^k_i\}_{i \le N_k}$;
		\item set $\dom(f_k) : = \cup_{i \le N_k} Q^k_i$ and $f :\equiv r^k_i$ on $Q^k_i$.
	\end{enumerate}
	Notice that $C^k_i$s always exist and are proper since $\hat F$ is simple.
	The rest of the proof is the same as in \cite[Theorem~2]{Osinenko2018-constr-sel-thm}.
	In brief, we show inductively that $f_k$s are indeed measurable and approximate $\hat F$ as desired.
	We then take the last one, $f:=f_N$ and conclude that $\nrm{F(x) - f(x)} \le \nrm{\hat F(x) - f(x)} + \nrm{\hat F(x) - F(x)} \le \eps$ on $\dom(f)$ as required, noticing that $F(x)$s are located.
\end{proof}

\begin{crl}
	If $F$ has representable inverse, $f_k$ converge to a measurable function with $f(x) \in F(x)$ on a representable domain.
\end{crl}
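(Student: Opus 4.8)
The plan is to re-run the construction from the proof of Theorem \ref{thm:meas-sel}, but with two modifications enabled by the extra hypothesis: exploit the representable inverse of $F$ to work with the \emph{true} $F$ in place of the simple approximation $\hat F$, and read off convergence of $\{f_k\}$ directly from the consistency condition already built into the sets $D^k_i$. The target is then to show that the limit $f := \lim_k f_k$ is measurable, lives on a representable domain, and satisfies $f(x) \in F(x)$ exactly, rather than merely $\nrm{F(x) - f(x)} \le \eps$.

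First I would observe that Definition \ref{dfn:represent-inverse} is exactly what is needed so that the sets $C^k_i := \{ x \in \dom(F) : \nrm{r^k_i - F(x)} < \frac{1}{2^k} \}$, \emph{now defined with $F$ itself}, are representable and, after the countable reduction of Step 3, proper. This is precisely the property the proof of Theorem \ref{thm:meas-sel} had to purchase by passing to $\hat F$ and paying a $\frac{\eps}{2}$ slack. Working with $F$ directly removes that slack, so each resulting $f_k$ obeys the sharp estimate $\nrm{F(x) - f_k(x)} < \frac{1}{2^k}$ on $\dom(f_k)$.

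Next I would extract convergence from the $D^k_i$ step. Since $f_k \equiv r^k_i$ on $Q^k_i \subseteq A^k_i \subseteq D^k_i$, and membership in $D^k_i$ forces $\abs{r^k_i - f_{k-1}(x)} < \frac{1}{2^{k-1}}$, we get $\abs{f_k(x) - f_{k-1}(x)} < \frac{1}{2^{k-1}}$ wherever both are defined, hence $\abs{f_m(x) - f_k(x)} < \frac{1}{2^{k-1}}$ for all $m \ge k$. This is a Cauchy condition with an explicit modulus, so $f(x) := \lim_k f_k(x)$ exists with $\abs{f(x) - f_k(x)} \le \frac{1}{2^{k-1}}$, and measurability of $f$ follows from measurability of each $f_k$ together with this uniform rate. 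Combining the two estimates gives $\nrm{F(x) - f(x)} \le \nrm{F(x) - f_k(x)} + \abs{f_k(x) - f(x)} < \frac{3}{2^k}$ for every $k$, whence $\nrm{F(x) - f(x)} = 0$; and because $F$ is regular, $F(x)$ is a finite union of closed intervals, hence closed and located, so distance zero yields $f(x) \in F(x)$.

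The main obstacle I expect is bookkeeping the domain. The $\dom(f_k)$ are nested decreasing (each $D^k_i \subseteq \dom(f_{k-1})$), so the estimates hold simultaneously only on the limit $\bigcap_k \dom(f_k)$, and I must verify that this limit is itself representable rather than a mere countable intersection of representable sets, which need not be representable without further care. The decisive point is that representability of each $C^k_i$ is now \emph{inherited} from the representable-inverse hypothesis, so the arbitrarily thin exceptional blocks of successive steps can be amalgamated into a single arbitrarily thin exceptional block via the countable reduction, delivering one representable domain carrying the full convergence; making this amalgamation quantitative (summing the measures of the exceptional blocks into a geometric series bounded by the prescribed $\eps$) is the step that will require the most care.
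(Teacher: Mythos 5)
Your overall architecture is the intended one: the paper states this corollary without proof (deferring to \cite{Osinenko2018-constr-sel-thm}), but the construction it points to is exactly what you describe. Definition \ref{dfn:represent-inverse} (applied to singleton sequences $\{r^k_i\}$) is what makes the sets $C^k_i$ representable with $F$ itself in place of $\hat F$; the nesting through the $D^k_i$ gives the explicit Cauchy modulus $\abs{f_k(x)-f_{k-1}(x)} < 2^{-(k-1)}$; and the exceptional thin blocks produced at the infinitely many stages are amalgamated into a single block of measure at most $\eps$ via a geometric series. Those parts of your argument, including the telescoping bound $\nrm{F(x)-f(x)} < 3\cdot 2^{-k}$ and your correct identification of the domain bookkeeping as the laborious step, are sound.

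The gap is your last inference: ``$F(x)$ is a finite union of closed intervals, hence closed and located, so distance zero yields $f(x) \in F(x)$.'' Constructively this fails whenever $F(x)$ has more than one chunk: membership in a union is a disjunction, so it requires a witness of \emph{which} chunk contains the point, and distance zero supplies none. Concretely, let $a \in [-1,1]$ be a real number for which $a \ge 0 \lor a \le 0$ is not decided, and let $F(x) :\equiv [-2,a] \cup [a,2]$, a regular set-valued function with constant $\alpha^i_k, \beta^i_k$ (cf.\ Definition \ref{dfn:regular-SVF}). Since $\max(-a,0)+\max(a,0)=\abs{a}$ and $\max(-a,0)-\max(a,0)=-a$, one computes constructively $\nrm{0 - F(x)} = \min\left(\max(-a,0),\max(a,0)\right) = \tfrac12\left(\abs{a}-\abs{a}\right) = 0$, yet $0 \in F(x)$ is precisely the statement $a \ge 0 \lor a \le 0$, an instance of the lesser limited principle of omniscience; so this union is located, classically closed, contains points at distance zero, and still membership cannot be asserted. (Your argument is fine when every $F(x)$ is a single chunk: $\max\left(\alpha(x)-y,\, y-\beta(x),\, 0\right)=0$ does give $\alpha(x) \le y \le \beta(x)$ directly.) To close the corollary as stated you need one further ingredient: either (i) read $f(x) \in F(x)$ metrically, as $\nrm{F(x)-f(x)}=0$, which is arguably the intended constructive content of the statement; or (ii) a uniform positive separation $g$ between the chunks of $F(x)$, in which case once $3 \cdot 2^{-k} < \nicefrac{g}{3}$ at most one chunk can be $\nicefrac{g}{3}$-close to $f(x)$, an approximate-minimum search over the finitely many chunk distances identifies that chunk, and the single-interval argument applied to it gives genuine membership. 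Without one of these, the final line of your proof does not go through in the constructive setting this paper works in.
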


\section{System analysis}\label{sec:sys-analys}

In this section, we briefly overview selected aspects of system analysis done constructively. 
First, regarding linear systems, as mentioned in Section \ref{sec:framework}, the major difficulty has to do with exact eigenvectors, which consequently complicates various matrix decompositions ubiquitous in classical analyses.
However, with the help of \cite[Lemma~4.1]{Beeson1980}, we can find approximate eigenvectors as per:
\begin{thm}[Constructive eigen-decomposition \cite{Osinenko2018-constr-eig-vec}]
	\label{thm:constr-eig}
	Let $A$ be a complex-valued $n \times n$ matrix with the characteristic polynomial $\chi_A(\lambda)$.
	For any $\eps > 0$, there exist a $k \le n$ linearly independent vectors $\hat v_1, \dots \hat v_k$ and complex numbers $\hat \lambda_1, \dots \hat \lambda_m$ \sut	$\forall i = 1, \dots k  \spc \| A \hat v_i - \hat \lambda_i \hat v_i \| \le \eps$.
\end{thm}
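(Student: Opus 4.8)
The plan is to sidestep the constructively forbidden operations on $A$ directly --- deciding ranks and solving singular systems over arbitrary complex entries --- by first replacing $A$ with a nearby matrix having algebraic entries, carrying out exact linear algebra there, and then transferring the result back by a one-line perturbation estimate. First I would exploit the constructive definition of the entries of $A$ as algorithms producing rational approximations to build a Gaussian-rational (hence algebraic-entried) matrix $\tilde A$ with $\nrm{A - \tilde A} \le \eps$ in the induced operator norm. Then I would form the characteristic polynomial $\chi_{\tilde A}(\lambda) = \det(\tilde A - \lambda I)$, whose coefficients are polynomial expressions in the entries of $\tilde A$ and are therefore again algebraic; using constructive factorization of polynomials over the (discrete) field of algebraic numbers, its roots $\hat \lambda_1, \dots, \hat \lambda_m$ are obtained exactly as algebraic numbers and serve as the approximate eigenvalues.

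The crux is the next step, where Beeson's Lemma~4.1 does the decisive work. For each root $\hat \lambda_j$ the matrix $\tilde A - \hat \lambda_j I$ again has algebraic entries, so I would compute a basis of its null space by Gaussian elimination: at every stage the lemma lets me decide whether a prospective pivot equals zero, so the rank is determined and a genuine, exact basis of eigenvectors of $\tilde A$ is produced. Normalizing each such vector to unit norm and gathering them across the distinct $\hat \lambda_j$ --- eigenvectors belonging to distinct eigenvalues being automatically linearly independent, and an independent basis being chosen inside each eigenspace --- yields $k \le n$ linearly independent vectors $\hat v_1, \dots, \hat v_k$ satisfying $\tilde A \hat v_i = \hat \lambda_i \hat v_i$ exactly, with the eigenvalue label $\hat \lambda_i$ attached to each.

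The residual for the original matrix then follows immediately: since $\tilde A \hat v_i = \hat \lambda_i \hat v_i$ and $\nrm{\hat v_i} = 1$, we have $\nrm{A \hat v_i - \hat \lambda_i \hat v_i} = \nrm{(A - \tilde A) \hat v_i} \le \nrm{A - \tilde A} \le \eps$, as required. The main obstacle is precisely the null-space computation: over arbitrary complex entries, deciding the rank of $A - \lambda I$ (equivalently, whether a given pivot vanishes) is not constructively valid, and it is this undecidability that forces the detour through the algebraic approximation $\tilde A$, where every equality test becomes decidable via Beeson's lemma. The remaining care needed is to ensure that both the root extraction and the subsequent elimination are carried out over a constructively discrete field, a role the algebraic numbers fill.
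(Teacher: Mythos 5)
Your proposal is correct and follows essentially the route the paper indicates: perturb $A$ to a matrix with algebraic entries, exploit Beeson's Lemma~4.1 to make equality (hence pivoting, rank, and root identification) decidable over the algebraic numbers, compute an exact eigen-decomposition there, and transfer back via the perturbation bound $\nrm{A \hat v_i - \hat \lambda_i \hat v_i} = \nrm{(A - \tilde A)\hat v_i} \le \eps$. This matches the paper's stated reliance on \cite[Lemma~4.1]{Beeson1980} and the construction in the cited source, so there is nothing to correct.
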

A combination of Theorem \ref{thm:constr-eig} and certain perturbation bounds on matrix exponentials
\cite{kaagstrom1977bounds,van1977sensitivity,lee1993sharp}
then enables the eigenvalue criterion for stability.
Now, we briefly tackle nonlinear systems and start with trajectory existence.
To this end, consider the following (cf. Definition \ref{dfn:regular-SVF}):
\begin{dfn}[Regular measurable function]
\label{dfn:regular-fnc}	
	A function $f: \X \times \R \ra \R^n, \X \subseteq \R^n$ with a domain $\X \times \cup_i \B_i$ defined as $\forall x \in \X \spc \forall i \spc \forall t \in \B_i \spc f(x, t) = \alpha^i(x, t), i \in \N$
	with continuous $\alpha^i: \R^n \times \R \ra \R$ is called regular.
\end{dfn}
Respectively, let us call $f$ Lipschitz-regular in $x$ if $\alpha^i$ are Lipschitz in $x$.
We have:
\begin{thm}
\label{thm:Caratheodory-constr}
Consider the initial value problem $D x = f(x, t), x(0) = x_0$ on a hyperrectangle $\X \times [0,T]$ with $f$ being Lipschitz-regular.
There exists a local unique solution in the extended sense \ie $\D x$ satisfies the respective differential equation on a representable domain.
Moreover, the solution depends on the initial condition uniformly continuously.
\end{thm}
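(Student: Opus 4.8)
The plan is to recast the initial value problem in integral (Carath\'eodory) form and solve it by a constructive contraction-mapping argument: the classical Picard scheme, but with every step equipped with an explicit convergence certificate. First I would introduce the Picard operator $T[x](t) := x_0 + \int_0^t f(x(\tau), \tau) \diff \tau$ acting on $\R^n$-valued functions defined on a short time interval $[0,\eta]$ whose graphs stay in a ball $\ball_\rho(x_0) \subseteq \X$. The key preliminary point is that this integral is well-defined: since $f$ is regular, on each time-block $\B_i$ it agrees with a continuous $\alpha^i$, so $\tau \mapsto f(x(\tau), \tau)$ is piece-wise continuous and integrable in the constructive sense of Section~\ref{sec:selector}, the block boundaries $\cup_i \partial \B_i$ forming a generalized block of arbitrarily small $\mu$-value. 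This is exactly what forces the conclusion to hold on a representable domain and in the extended sense: the recovered identity $\D x = f(x, t)$ will be valid off this representable-thin boundary set.

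Next I would pin down $\eta$ and $\rho$ so that $T$ both maps the ball into itself and is a contraction. Using a common bound $M$ on $\nrm{f}$ over the compact pieces one gets $\nrm{T[x](t) - x_0} \le M \eta$, so $\eta \le \rho / M$ keeps the iterates inside $\ball_\rho(x_0)$. For the contraction, Lipschitz-regularity supplies a common constant $L$ with $\nrm{\alpha^i(x,t) - \alpha^i(y,t)} \le L\nrm{x-y}$, whence $\sup_t \nrm{T[x](t) - T[y](t)} \le L\eta \sup_t \nrm{x(t) - y(t)}$; choosing $\eta < \nicefrac 1 L$ (or, to cover all of $[0,T]$ at once, replacing the sup-norm by a Bielecki-weighted norm $\sup_t e^{-2Lt}\nrm{x(t)}$) makes $T$ a contraction with an explicit modulus. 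The constructive Banach fixed-point theorem then yields a unique fixed point $x^\ast$ as the geometric limit of the iterates $x_{i+1} = T[x_i]$ with a fully computable rate, and differentiating the integral equation off the boundary set recovers $\D x^\ast = f(x^\ast, t)$ in the extended sense.

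For the uniform continuous dependence I would compare the fixed points $x^\ast, y^\ast$ issuing from $x_0, y_0$: subtracting their integral equations and applying the Lipschitz bound gives $\nrm{x^\ast(t) - y^\ast(t)} \le \nrm{x_0 - y_0} + L\int_0^t \nrm{x^\ast(\tau) - y^\ast(\tau)} \diff\tau$, and a constructive Gr\"onwall inequality -- unproblematic since it is an explicit integral estimate -- yields $\nrm{x^\ast(t) - y^\ast(t)} \le \nrm{x_0 - y_0}\, e^{Lt}$. This is an explicit modulus of continuity in the initial condition, uniform over $[0,\eta]$, which settles the last claim.

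I expect the main obstacle to be neither the contraction nor Gr\"onwall (both carry their constructive content for free) but the careful bookkeeping of the integral over the representable domain: one must verify that each Picard iterate remains a function for which the constructive integral of Section~\ref{sec:selector} is defined, that the boundary set on which the equation may fail stays representable-thin uniformly along the iteration, and that the $\mu$-value estimates compose correctly through the fixed-point limit. Securing a genuine common bound $M$ and a common constant $L$ across the possibly infinitely many blocks $\B_i$ -- an equi-boundedness and equi-Lipschitz hypothesis analogous to that of Theorem~\ref{thm:aEVT-fnc} -- is the delicate structural requirement that makes all of the above estimates uniform and hence constructively effective.
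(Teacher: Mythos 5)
Your proposal is correct and follows essentially the same route as the paper, which states that its proof ``essentially utilized the regularity of $f$ to do the Picard iteration constructively'': your contraction-mapping argument with explicit moduli, the use of the block structure of a Lipschitz-regular $f$ to make the integral and the off-boundary differentiation constructive, and the Gr\"onwall estimate for uniform dependence on $x_0$ are precisely a fleshed-out version of that plan. Your closing caveat about securing a common bound $M$ and common Lipschitz constant $L$ across the (possibly infinitely many) blocks is a genuine point of care, and it is the kind of bookkeeping the referenced detailed proof must and does handle via the properness and representability assumed of the domains.
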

The proof of Theorem \ref{thm:Caratheodory-constr} essentially utilized the regularity of $f$ to do the Picard iteration constructively.
Regarding Lyapunov stability, the comparison principles of \cite[Theorem 3.8]{Khalil1996-nonlin-sys} require certain modifications.
First, call a function $w: \R^n \ra \R$ \textit{strictly increasing (in norm)} if there is a map $\nu: \Q^n \times \Q^n \rightarrow \Q_{>0} \spc \sut \forall x, y \in \Q^n \spc \nrm{x} < \nrm{y} \implies w(y) - w(x) > \nu(x, y)$.
With this at hand, we have:
\begin{thm}[\cite{Osinenko2017-constr-stab}]
	\label{thm:constr-stab}
	Let $\X \subset \R^n$ be compact and $\dot x = f(x,t), x \in \X$ be a dynamical system with the equilibrium point $x_e = 0$ and $f$ Lipschitz-continuous in $x$.
	Suppose there exist positive-definite functions $V, w_1, w_2, w_3: \X \times \R_{\ge 0} \ra \R$, $V$ continuously differentiable, $w_1, w_2, w_3$ strictly increasing, $w_3$ Lipschitz continuous, with the following properties:
	\begin{enumerate}
		\item $\forall t \ge 0 \spc \forall x \in \X \spc w_1(x) \le V(x, t) \le w_2(x)$ and there is $\xi > 0$ \sut $\forall \|x\| \ge \|y\| \spc w_2(x) - w_2(y) \ge \xi ( \|x\| - \|y\| )$;     
		\item $\forall t \ge 0 \spc \forall x \in \X \spc \dot{V}(x,t) \le - w_3(x)$.
	\end{enumerate}
	Then, $x_e = 0$ is asymptotically stable for any $x_0$ in a set $\X_0 \subseteq \X$ that depends only on the data $f, V, w_1, w_2, w_3$.   
\end{thm}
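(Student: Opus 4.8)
The plan is to mirror the two classical pillars of Lyapunov's theorem --- forward invariance of a sublevel set of $V$ (stability) and monotone decrease of $V$ along trajectories (attractivity) --- but to replace every appeal to an extremum over a sphere or annulus, and every ``suppose the limit is nonzero'' contradiction, by \emph{explicit positive rational bounds} read off from the given data. The two quantitative hypotheses are exactly what makes this feasible: the linear increment bound $w_2(x) - w_2(y) \ge \xi(\nrm x - \nrm y)$ and the rational modulus $\nu$ witnessing that $w_1, w_3$ are strictly increasing in norm. First I would invoke Theorem \ref{thm:Caratheodory-constr} (legitimate since $f$ is Lipschitz in $x$) to obtain, on a representable domain, a unique local trajectory $x(t)$ depending uniformly continuously on $x_0$, so that the symbols $x(t)$ and $\dot V(x(t),t)$ carry finite computational content.

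Next I would build a forward-invariant sublevel set. Fix a radius $R$ with $\ball_R \subseteq \X$. Using $w_1(0)=0$ and strict increasingness I would extract a uniform positive rational lower bound $\underline c$ of $w_1$ on $\{\nrm x \ge R\}$ (this is one of the two places where the modulus $\nu$ does real work; see the obstacle below), then pick a rational $c < \underline c$. Then $V(x,t) \le c$ together with $w_1(x) \le V(x,t)$ gives $w_1(x) < \underline c$, hence $\nrm x < R$, so $\{V \le c\} \subseteq \ball_R$ uniformly in $t$. Since $\dot V \le -w_3 \le 0$, the value $V(x(t),t)$ is non-increasing, so $\{V \le c\}$ is forward invariant. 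For an inner radius, continuity of $w_2$ at $0$ (with $w_2(0)=0$) yields through its modulus a rational $\rho > 0$ with $w_2(x) \le c$ on $\ball_\rho$, whence $\ball_\rho \subseteq \{V \le c\} \subseteq \ball_R$. This already delivers stability: any $x_0 \in \ball_\rho$ keeps $x(t) \in \ball_R$, and I would take $\X_0 := \ball_\rho$ (or the certified set $\{V \le c\}$), which manifestly depends only on $f, V, w_1, w_2, w_3$ and their moduli.

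For attractivity I would bypass the classical contradiction argument in favour of a direct finite-time-to-vicinity estimate. Given any target radius $r_* > 0$, repeat the above to obtain a level $c_*$ with $\{V \le c_*\} \subseteq \ball_{r_*}$ and an inner radius $\rho_*$. The key observation is that while $V(x(t),t) > c_*$ one has $w_2(x) \ge V(x,t) > c_*$, so the linear bound $\xi$ forces $\nrm{x(t)} \ge \rho_*$; strict increasingness of $w_3$ then supplies an explicit positive rational $\underline w_3$ with $w_3(x(t)) \ge \underline w_3$. Hence $\dot V \le -\underline w_3$ as long as $V > c_*$, so $V(x(t),t) \le c - \underline w_3\, t$ and the trajectory must reach $\{V \le c_*\}$ no later than the explicitly computable time $T_* = (c - c_*)/\underline w_3$; by forward invariance it then remains in $\ball_{r_*}$. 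Letting $r_* \sea 0$ produces an explicit rate together with a convergence certificate, which is the constructively meaningful reading of asymptotic stability.

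The main obstacle, and where the genuine constructive content resides, is precisely this replacement of non-constructive extrema by data-supplied rational bounds. Classically one writes $\min_{\nrm x = R} w_1$ and $\min_{\rho_* \le \nrm x \le R} w_3$, but neither minimum is constructively available --- one cannot in general decide whether it is attained nor locate the minimizer --- so the uniform positive lower bounds $\underline c$ and $\underline w_3$ must be threaded entirely through the modulus $\nu$ and the constant $\xi$, quantities that are decidable on rationals. One must also keep in mind that $\nrm{x(t)} = 0$ is itself undecidable, which is exactly why the conclusion is proved as a family of explicit $(\rho, R, T_*)$ constructions indexed by the target precision rather than as a bare limit statement.
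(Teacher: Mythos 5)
Your overall architecture --- a forward-invariant sublevel set $\{V \le c\}$ squeezed between two balls for stability, plus a uniform decrease rate on an annulus giving an explicit time $T_* = (c-c_*)/\underline{w_3}$ to reach any target ball for attractivity --- is a legitimate constructivization, and note it differs from the intended route: the paper gives no inline proof (the theorem is imported from \cite{Osinenko2017-constr-stab}), but the surrounding text states the argument there is a constructive modification of the comparison principle of \cite[Theorem~3.8]{Khalil1996-nonlin-sys}, i.e.\ it goes through class-$\mathcal K$ envelopes of $w_1, w_2, w_3$ and a comparison ODE rather than your direct ball--annulus--finite-time estimate. The genuine gap in your proposal is the step where you invoke $\xi$: you claim that $w_2(x(t)) \ge V(x(t),t) > c_*$ together with the linear bound $\xi$ ``forces $\nrm{x(t)} \ge \rho_*$''. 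This inference runs in the wrong direction. Taking $y=0$ in the hypothesis gives $w_2(x) \ge \xi \nrm{x}$, a \emph{lower} bound on $w_2$, equivalently $\nrm{x} \le w_2(x)/\xi$; from $w_2(x) > c_*$ it yields no lower bound on $\nrm{x}$ at all. Concretely, on a compact $\X$ the function $w_2(x) = M\sqrt{\nrm{x}}$ satisfies the $\xi$-condition for any fixed $\xi$ once $M$ is large, yet $w_2(x) > c_*$ only forces $\nrm{x} > (c_*/M)^2$, arbitrarily small and independent of $\xi$. What actually pins the trajectory in the annulus is the continuity modulus of $w_2$ at the origin --- the very device you already use for the inner radius: choose $\rho_*$ so that $w_2 \le c_*$ on $\ball_{\rho_*}$; then $w_2(x(t)) > c_*$ refutes $\nrm{x(t)} < \rho_*$, and by the constructive comparison property (either $\nrm{x(t)} > \rho_*/2$ or $\nrm{x(t)} < \rho_*$, the latter being absurd) you get positively $\nrm{x(t)} > \rho_*/2$. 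With that repair your finite-time argument goes through, but then $\xi$ is never used in your proof; its actual constructive role (visible in the comparison-principle route) is to make the upper envelope $\alpha_2(r) := \sup_{\nrm{x}\le r} w_2(x)$ satisfy $\alpha_2(r) - \alpha_2(s) \ge \xi(r-s)$, hence be constructively invertible with Lipschitz inverse, so that $\dot V \le -w_3(x)$ can be turned into a scalar comparison inequality in $V$.

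A secondary slip of the same constructive flavor: from ``$w_1 \ge \underline{c}$ on $\{\nrm{x} \ge R\}$'' and $w_1(x) < \underline{c}$ you conclude $\nrm{x} < R$. Constructively this only yields $\neg(\nrm{x} \ge R)$, i.e.\ $\neg\neg(\nrm{x} < R)$, and removing that double negation is an instance of Markov's principle. The clean fix is to work with two radii and the rational anchoring that the modulus $\nu$ demands (recall $\nu$ is only defined on rational pairs and its value depends on both points): fix rationals $0 < \nrm{y_1} < \nrm{y_2} < R$, show via continuity of $w_1$ that $\nrm{x} > \nrm{y_2} \implies w_1(x) \ge w_1(y_2) \ge \nu(y_1,y_2) =: \underline{c}$; then $w_1(x) < \underline{c}$ gives $\neg(\nrm{x} > \nrm{y_2})$, hence $\nrm{x} \le \nrm{y_2} < R$, which \emph{is} constructively valid since $a \le b$ is by definition $\neg(b < a)$. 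With these two repairs (and a continuation argument for forward invariance, which you gloss over but which is standard), your outline stands as a correct, more elementary alternative to the cited proof.
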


%
%
 
%
%


\section{Overview of case studies}\label{sec:cases}

Application of constructive analysis to control theory was demonstrated in several case studies.
First, regarding general stabilization, it was revealed, supporting the claims in the end of Section \ref{sec:optimal}, that computational uncertainty has a great impact on stabilization quality and it cannot in general simply be submerged into actuator, system or measurement uncertainty, as was shown in case studies with mobile robots \cite{Osinenko2018-pract-stabilization}.
Remarkably, effective computation of sampling time in practical sample-and-hold stabilization was enabled by constructive analysis under certain conditions on the involved CLF.
So, for instance, in a case study of sliding-mode traction control \cite{Osinenko2018-constr-SMC}, a practically satisfactory bound of 1 ms was achieved under the proposed effective computation using constructive analysis.
Selector Theorem \ref{thm:meas-sel} was used in non-smooth backstepping for a mobile robot parking while reducing chattering compared to a baseline algorithm \cite{Osinenko2018-constr-sel-thm}.
The work \cite{Osinenko2018-constr-eig-vec} applied constructive approximate eigenvectors in an LQR, also demonstrating high influence of computational uncertainty.
Theorem \ref{thm:constr-stab} was used to compute stability certificates via algorithms extracted from the proof in \cite{Osinenko2017-constr-stab}.

\vspace{-10pt}

\section{Conclusion}\label{sec:conclusion}

Constructive analysis can be considered a suitable framework for control theory to perform mathematical analyses with explicit account for computational uncertainty.
Not only does it enable the said analyses, it can also reveal computational weaknesses in classical results and enable new computational algorithms for control.
This paper demonstrated a set of constructive results in control theory which supports potentials of the framework.
As for the indicators of when the framework may be resorted to, we may list such arguments as sequential compactness, exact optimizers and existential proofs by contradiction (since they do not construct algorithms to find the related objects).
Numerical troubles with control algorithms may in turn serve as practical indicators for looking at the problem from the constructive standpoint.
\bibliography{bib/adversarial,bib/analysis,bib/automated-reasoning,bib/computable,bib/constr-math,bib/diff-games,bib/discont-DE,bib/Filippov-sol,bib/formal-ctrl,bib/lin-ctrl,bib/logic,bib/model-reduction,bib/MPC,bib/nonlin-ctrl,bib/nonsmooth-analysis,bib/opt-ctrl,bib/Osinenko,bib/perturb-thr,bib/selector-misc,bib/semiconcave,bib/set-thr,bib/sliding-mode,bib/soft,bib/stabilization,bib/stochastic,bib/topology,bib/uncertainty,bib/verified-integration,bib/viability,bib/metric-spaces,bib/DP,bib/constructing-LFs}
\bibliographystyle{IEEEtran}

\end{document}